\documentclass[12pt]{article}

\usepackage{amssymb}
\usepackage{amsmath}
\usepackage[usenames]{color}
\usepackage{graphicx}
\usepackage{amsthm}
\usepackage{graphics}
\usepackage[colorlinks=true,
linkcolor=webgreen,
filecolor=webbrown,
citecolor=webgreen]{hyperref}

\definecolor{webgreen}{rgb}{0,.5,0}
\definecolor{webbrown}{rgb}{.6,0,0}
\definecolor{red}{rgb}{1,0,0}

\usepackage{color}
\usepackage{url}

\usepackage{amsfonts}
\usepackage{latexsym}

\theoremstyle{plain}
\newtheorem{theorem}{Theorem}

\newtheorem{lemma}[theorem]{Lemma}

\theoremstyle{definition}
\newtheorem{definition}[theorem]{Definition}

\theoremstyle{remark}

\begin{document}

\title{Binomial transforms of the modified $k$-Fibonacci-like sequence}

\date{}
\author{Youngwoo Kwon\\
Department of mathematics, Korea University, Seoul, Republic of Korea\\
\href{mailto:ywkwon81@korea.ac.kr}{\tt ywkwon81@korea.ac.kr}\\
}

\maketitle
\begin{abstract}
This study applies the binomial, $k$-binomial, rising $k$-binomial and falling $k$-binomial transforms to the modified $k$-Fibonacci-like sequence. Also, the Binet formulas and generating functions of the above mentioned four transforms are newly found by the recurrence relations.
\end{abstract}

\section{Introduction}
The Fibonacci sequence $\left(F_{n}\right)_{n\geq0}$ is defined by the recurrence relation
\begin{align*}
F_{n+1}=&F_{n}+F_{n-1} \text{~for~} n\ge1
\end{align*}
with the initial conditions $F_{0}=0$ and $F_{1}=1$.

Many authors have studied the Fibonacci sequence, some of whom introduced new sequences related to it as well as proving many identities for them.

In particular, Falc$\acute{\rm{o}}$n and Plaza \cite{FP02} introduced the $k$-Fibonacci sequence.

\begin{definition}[\cite{FP02}]
For any positive real number $k$, the $k$-Fibonacci sequence $\left( F_{k,n} \right)_{n\geq0}$ is defined by recurrence relation
$$F_{k,n+1} = k F_{k,n} + F_{k,n-1} ~\text{for}~n\ge1$$
with the initial conditions $F_{k,0} =0$ and  $F_{k,1} =1$.
\end{definition}

Also, Kwon \cite{YK} introduced the modified $k$-Fibonacci-like sequence.
\begin{definition}[\cite{YK}]
For any positive real number $k$, the modified $k$-Fibonacci-like sequence $\left(M_{k,n}\right)_{n\geq0}$ is defined by the recurrence relation
$$M_{k,n+1} = k M_{k,n} + M_{k,n-1} ~\text{for}~n\ge1 $$
with the initial conditions $M_{k,0} = M_{k,1} = 2$.
\end{definition}

The first few modified $k$-Fibonacci-like numbers are as follows:
\begin{align*}
M_{k,2}=& 2k+2,\\
M_{k,3}=& 2k^{2}+2k+2,\\
M_{k,4}=& 2k^{3}+2k^{2}+4k+2,\\
M_{k,5}=& 2k^{4}+2k^{3} +6k^{2}+4k+2.
\end{align*}


Kwon \cite{YK} studied the following identities between the $k$-Fibonacci sequence and the modified $k$-Fibonacci-like sequence.
$$M_{k,n} = 2 \left( F_{k,n} + F_{k,n-1}\right) \text{ and } F_{k,n} = \frac{1}{2}\sum_{i=0}^{n-1} M_{k,n-i}(-1)^{i}$$

Spivey and Steil \cite{SS} introduced various binomial transforms.
\begin{enumerate}
\item[(1)] The binomial transform $B$ of the integer sequence $A=\left\{a_{0}, a_1 , a_2 , \ldots \right\}$, which is denoted by $B(A)=\left\{b_{n}\right\}$ and defined by
$$b_{n} = \sum_{i=0}^{n}\binom{n}{i} a_{i}.$$
\item[(2)] The $k$-binomial transform $W$ of the integer sequence $A=\left\{a_{0}, a_1 , a_2 , \ldots \right\}$, which is denoted by $W(A)=\left\{w_{n}\right\}$ and defined by
$$w_{n} = \sum_{i=0}^{n}\binom{n}{i} k^{n} a_{i}.$$
\item[(3)] The rising $k$-binomial transform $R$ of the integer sequence $A=\left\{a_{0}, a_1 , a_2 , \ldots \right\}$, which is denoted by $B(A)=\left\{r_{n}\right\}$ and defined by
$$r_{n} = \sum_{i=0}^{n}\binom{n}{i} k^{i} a_{i} .$$
\item[(4)] The falling $k$-binomial transform $F$ of the integer sequence $A=\left\{a_{0}, a_1 , a_2 , \ldots \right\}$, which is denoted by $F(A)=\left\{f_{n}\right\}$ and defined by
$$f_{n} = \sum_{i=0}^{n}\binom{n}{i} k^{n-i} a_{i} .$$
\end{enumerate}

Other latest research \cite{BJS, FP03, YT02} also examined the various binomial transforms for several special sequences. These transforms are interesting and meaningful as they introduced several new approaches.

Based on those preceding studies, this study applies the four binomial transforms namely, binomial, $k$-binomial, rising $k$-binomial and falling $k$-binomial transforms to the modified $k$-Fibonacci-like sequence. This study also proves their properties.

\section{The binomial transform of the modified \texorpdfstring{$k$}{Lg}-Fibonacci-like sequence}
The binomial transform of the modified $k$-Fibonacci-like sequence $\left(M_{k,n}\right)_{n\geq0}$ is denoted by $B_{k}=\left(b_{k,n}\right)_{n \geq0}$ where
$$b_{k,n} = \sum_{i=0}^{n}\binom{n}{i}M_{k,i}.$$

The only binomial transforms of the modified $k$-Fibonacci-like sequences indexed in OEIS \cite{S} are as follows:
\begin{align*}
B_{1} =&\left\{2, 4, 10, 26, 68, 178, \ldots \right\} : A052995-\{0\} \text{ or } A055819-\{1\}\\
B_{2} =&\left\{2, 4, 12, 40, 136, 464, \ldots \right\} : A056236\\
B_{3} =&\left\{2, 4, 14, 58, 248, 1066, \ldots \right\} \\
B_{4} =&\left\{2, 4, 16, 80, 416, 2176, \ldots \right\} \\
B_{5} =&\left\{2, 4, 18, 106, 652, 4034, \ldots \right\}
\end{align*}

\begin{lemma}\label{binomial_T}
The binomial transform of the modified $k$-Fibonacci-like sequence satisfies the relation
$$b_{k,n+1} - b_{k,n} = \sum_{i=0}^{n}\binom{n}{i} M_{k,i+1}.$$
\end{lemma}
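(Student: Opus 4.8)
The plan is to start directly from the definition $b_{k,n+1} = \sum_{i=0}^{n+1}\binom{n+1}{i} M_{k,i}$ and apply Pascal's rule $\binom{n+1}{i} = \binom{n}{i} + \binom{n}{i-1}$ to each binomial coefficient. This splits the sum into two pieces:
\begin{align*}
b_{k,n+1} = \sum_{i=0}^{n+1}\binom{n}{i} M_{k,i} + \sum_{i=0}^{n+1}\binom{n}{i-1} M_{k,i}.
\end{align*}

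In the first sum, the $i=n+1$ term vanishes because $\binom{n}{n+1}=0$, so it reduces to $\sum_{i=0}^{n}\binom{n}{i} M_{k,i} = b_{k,n}$. In the second sum, the $i=0$ term vanishes because $\binom{n}{-1}=0$; reindexing with $j = i-1$ turns it into $\sum_{j=0}^{n}\binom{n}{j} M_{k,j+1}$. Combining the two pieces gives $b_{k,n+1} = b_{k,n} + \sum_{i=0}^{n}\binom{n}{i} M_{k,i+1}$, and subtracting $b_{k,n}$ yields the claimed identity.

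I would also note the degenerate case $n=0$ separately (or observe that the argument above already covers it), where the statement reduces to $b_{k,1} - b_{k,0} = M_{k,1}$, i.e. $(M_{k,0}+M_{k,1}) - M_{k,0} = M_{k,1}$, which holds trivially. There is no real obstacle here: the only thing to be careful about is the handling of the boundary terms $i=0$ and $i=n+1$ when invoking Pascal's rule, and the reindexing of the shifted sum. Everything else is a routine rearrangement, so the proof should be short.
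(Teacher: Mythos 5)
Your proof is correct and follows essentially the same route as the paper's: both rest on Pascal's rule $\binom{n+1}{i}=\binom{n}{i}+\binom{n}{i-1}$ followed by a reindexing of the shifted sum, with you applying the rule before subtracting $b_{k,n}$ and the paper applying it to the difference afterward. The boundary-term bookkeeping you flag ($\binom{n}{n+1}=0$, $\binom{n}{-1}=0$) is exactly the care the paper's computation also takes, so there is nothing missing.
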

\begin{proof}
Note that $\binom{n}{0}=1$ and $\binom{n+1}{i} = \binom{n}{i}+\binom{n}{i-1}$.

The difference of the two consecutive binomial transforms is the following:
\begin{align*}
b_{k,n+1} -b_{k,n}&= \sum_{i=0}^{n+1} \binom{n+1}{i} M_{k,i}-\sum_{i=0}^{n}\binom{n}{i}M_{k,i}\\
&=\sum_{i=1}^{n}\left[\binom{n+1}{i}-\binom{n}{i} \right]M_{k,i} + M_{k,n+1} \\
&=\sum_{i=1}^{n}\binom{n}{i-1}M_{k,i}+M_{k,n+1}\\
&=\sum_{i=0}^{n-1}\binom{n}{i}M_{k,i+1} + \binom{n}{n}M_{k,n+1}=\sum_{i=0}^{n}\binom{n}{i}M_{k,i+1}
\end{align*}
\end{proof}
Note that $b_{k,n+1} = \sum_{i=0}^{n}\binom{n}{i}\left(M_{k,i}+M_{k,i+1}\right)$.
\begin{theorem}\label{binomial_Ta}
The binomial transform of the modified $k$-Fibonacci-like sequence $B_{k}=\left(b_{k,n}\right)_{n\geq0}$ satisfies the recurrence relation
$$b_{k,n+1} = (k+2)b_{k,n} - k b_{k,n-1} ~\text{for}~n\ge1$$
with the initial conditions $b_{k,0}=2$, $b_{k,1} = 4$.
\end{theorem}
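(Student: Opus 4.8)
The plan is to dispatch the two initial values by direct evaluation of the defining sum, and then to establish the recurrence by iterating Lemma~\ref{binomial_T} once more and feeding in the defining recurrence of $(M_{k,n})$. For the initial conditions, $b_{k,0}=\binom{0}{0}M_{k,0}=2$ and $b_{k,1}=\binom{1}{0}M_{k,0}+\binom{1}{1}M_{k,1}=2+2=4$, so it remains only to prove the three-term recurrence.

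For the recurrence, set $d_{k,n}:=b_{k,n+1}-b_{k,n}=\sum_{i=0}^{n}\binom{n}{i}M_{k,i+1}$, which is precisely the content of Lemma~\ref{binomial_T}. Now I would apply the same Pascal-triangle manipulation that appears in the proof of that lemma, but with the shifted sequence $(M_{k,i+1})_i$ in place of $(M_{k,i})_i$: writing $\binom{n+1}{i}=\binom{n}{i}+\binom{n}{i-1}$ and reindexing the second sum gives
$$d_{k,n+1}=\sum_{i=0}^{n+1}\binom{n+1}{i}M_{k,i+1}=d_{k,n}+\sum_{j=0}^{n}\binom{n}{j}M_{k,j+2}.$$
At this point I substitute the defining relation $M_{k,j+2}=kM_{k,j+1}+M_{k,j}$ into the last sum. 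The two resulting pieces are exactly $k\,d_{k,n}$ (again by Lemma~\ref{binomial_T}) and $b_{k,n}$ (by the definition of the binomial transform), so $d_{k,n+1}=(k+1)d_{k,n}+b_{k,n}$, that is,
$$b_{k,n+2}-b_{k,n+1}=(k+1)\bigl(b_{k,n+1}-b_{k,n}\bigr)+b_{k,n}.$$
Rearranging yields $b_{k,n+2}=(k+2)b_{k,n+1}-k\,b_{k,n}$ for $n\ge0$, and replacing $n$ by $n-1$ gives the claimed identity for $n\ge1$.

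There is no serious obstacle here; the only step that requires care is the index bookkeeping in the Pascal-rule manipulation — tracking which boundary terms fall outside the range $0\le i\le n$ — exactly the care already exercised in the proof of Lemma~\ref{binomial_T}. As a sanity check one can verify $b_{k,2}=(k+2)b_{k,1}-k\,b_{k,0}=2k+8$, which agrees with $\binom{2}{0}M_{k,0}+\binom{2}{1}M_{k,1}+\binom{2}{2}M_{k,2}=2+4+(2k+2)$ and, for $k=1$, with the value $10$ listed for $B_{1}$. One could alternatively derive the recurrence abstractly from the fact that the binomial transform carries a sequence with characteristic polynomial $p(x)$ to one with characteristic polynomial $p(x-1)$, and here $p(x)=x^{2}-kx-1$ gives $p(x-1)=x^{2}-(k+2)x+k$; but the computation above is preferable since it is self-contained given Lemma~\ref{binomial_T}.
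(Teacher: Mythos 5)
Your proof is correct. It rests on the same two ingredients as the paper's proof of Theorem~\ref{binomial_Ta} --- Lemma~\ref{binomial_T} and the defining recurrence $M_{k,i+1}=kM_{k,i}+M_{k,i-1}$ --- but organizes them differently. The paper expands $b_{k,n+1}=\sum_{i=0}^{n}\binom{n}{i}\left(M_{k,i}+M_{k,i+1}\right)$, substitutes the recurrence for $i\ge1$, and arrives at $b_{k,n+1}=(k+1)b_{k,n}+\sum_{i=1}^{n}\binom{n}{i}M_{k,i-1}+2-2k$; it then derives the parallel identity $b_{k,n}=kb_{k,n-1}+\sum_{i=1}^{n}\binom{n}{i}M_{k,i-1}+2-2k$ and subtracts, so that the leftover sum and the boundary constants cancel. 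You instead apply the difference identity of Lemma~\ref{binomial_T} a second time, to the shifted sequence $\left(M_{k,i+1}\right)_{i}$, obtaining the second difference $d_{k,n+1}-d_{k,n}=\sum_{j=0}^{n}\binom{n}{j}M_{k,j+2}$; after a single substitution of the recurrence, every sum in sight is already identified as $b_{k,n}$ or $d_{k,n}$, so there is nothing left to eliminate. Your version buys cleaner bookkeeping --- no stray $2-2k$ boundary terms and no need for the paper's rather terse ``on the other hand'' step --- at the small cost of having to observe that the Pascal-rule computation in Lemma~\ref{binomial_T} is generic in the sequence being transformed (it is, since that proof never uses any property of $M_{k,i}$). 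You also verify the initial conditions explicitly, which the paper omits, and your closing observation that the binomial transform sends a sequence with characteristic polynomial $p(x)=x^{2}-kx-1$ to one with characteristic polynomial $p(x-1)=x^{2}-(k+2)x+k$ is a correct and useful cross-check on the answer.
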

\begin{proof}
By Lemma \ref{binomial_T}, since $b_{k,n+1} = \sum_{i=0}^{n}\binom{n}{i}\left(M_{k,i}+M_{k,i+1}\right)$, then we have
\begin{align*}
b_{k,n+1}=&M_{k,0} +M_{k,1} + \sum_{i=1}^{n}\binom{n}{i}\left(M_{k,i} + M_{k,i+1}\right)\\
=&M_{k,0}+M_{k,1}+\sum_{i=1}^{n}\binom{n}{i}\left(M_{k,i}+kM_{k,i}+M_{k,i-1}\right)\\
=&\left[(k+1)M_{k,0}+(k+1)\sum_{i=1}^{n}\binom{n}{i}M_{k,i}\right]\\
&+\sum_{i=1}^{n}\binom{n}{i}M_{k,i-1} + M_{k,1}-kM_{k,0}\\
=&(k+1)\sum_{i=0}^{n}\binom{n}{i}M_{k,i}+\sum_{i=1}^{n}\binom{n}{i}M_{k,i-1}+M_{k,1}-kM_{k,0}\\
=&(k+1)b_{k,n}+\sum_{i=1}^{n}\binom{n}{i}M_{k,i-1} +2-2k.
\end{align*}
On the other hand, in the case of $\binom{n-1}{n}=0$, we can obtain the following:
\begin{align*}
b_{k,n} &=kb_{k,n-1} + \sum_{i=1}^{n}\binom{n}{i}M_{k,i-1}+2-2k.
\end{align*}
Based on the above two identities, this study draws the below formulas.
$$b_{k,n+1} - (k+1)b_{k,n} = b_{k,n} - k b_{k,n-1},$$
and so
$$b_{k,n+1} = (k+2)b_{k,n} - k b_{k,n-1}.$$
\end{proof}

Binet's formulas are well known in the Fibonacci number theory. In this study, Binet's formula for the binomial transform of the modified $k$-Fibonacci-like sequence is suggested as the following:

\begin{theorem}\label{binet_T}
Binet's formula for the binomial transform of the modified $k$-Fibonacci-like sequence is given by
$$b_{k,n}= 4\frac{r_{1}^{n}-r_{2}^{n}}{r_{1}-r_{2}}- 2k\frac{r_{1}^{n-1}-r_{2}^{n-1}}{r_{1}-r_{2}},$$
where $r_{1}$ and $r_{2}$ are the roots of the characteristic equation $x^{2}-(k+2)x+k=0$, and $r_{1}>r_{2}$.
\end{theorem}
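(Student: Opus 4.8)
The plan is to combine the linear recurrence established in Theorem~\ref{binomial_Ta} with the standard solution theory for second-order constant-coefficient recurrences. First I would record the basic facts about the characteristic equation $x^{2}-(k+2)x+k=0$: its discriminant is $(k+2)^{2}-4k=k^{2}+4$, which is strictly positive for every positive real $k$, so the roots $r_{1}>r_{2}$ are real and distinct, and by Vieta's formulas $r_{1}+r_{2}=k+2$ and $r_{1}r_{2}=k$. In particular $r_{1}\neq r_{2}$ and $r_{1},r_{2}\neq 0$, so every expression appearing in the statement (including the $r_{i}^{\,n-1}$ terms when $n=0$) is well defined.

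Next, because Theorem~\ref{binomial_Ta} tells us that $\left(b_{k,n}\right)_{n\ge 0}$ satisfies $b_{k,n+1}=(k+2)b_{k,n}-kb_{k,n-1}$ for $n\ge 1$ and the characteristic roots are distinct, the general term has the form $b_{k,n}=A r_{1}^{\,n}+B r_{2}^{\,n}$ with $A,B$ pinned down by $b_{k,0}=2$ and $b_{k,1}=4$. Solving the linear system $A+B=2$, $Ar_{1}+Br_{2}=4$ gives $A=(4-2r_{2})/(r_{1}-r_{2})$ and $B=(2r_{1}-4)/(r_{1}-r_{2})$, hence
\[
b_{k,n}=\frac{(4-2r_{2})\,r_{1}^{\,n}-(4-2r_{1})\,r_{2}^{\,n}}{r_{1}-r_{2}}.
\]

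Finally I would massage this into the stated shape. Regrouping the numerator,
\[
(4-2r_{2})\,r_{1}^{\,n}-(4-2r_{1})\,r_{2}^{\,n}=4\bigl(r_{1}^{\,n}-r_{2}^{\,n}\bigr)-2r_{1}r_{2}\bigl(r_{1}^{\,n-1}-r_{2}^{\,n-1}\bigr),
\]
and then substituting $r_{1}r_{2}=k$ yields precisely $b_{k,n}=4\frac{r_{1}^{\,n}-r_{2}^{\,n}}{r_{1}-r_{2}}-2k\frac{r_{1}^{\,n-1}-r_{2}^{\,n-1}}{r_{1}-r_{2}}$. An alternative that sidesteps the linear system is to set $c_{n}$ equal to the claimed right-hand side (reading $\frac{r_{1}^{\,n}-r_{2}^{\,n}}{r_{1}-r_{2}}$ as the Lucas-type sequence $U_{n}$ with $U_{0}=0$, $U_{1}=1$), to check $c_{0}=2$ and $c_{1}=4$ directly, to note that $c_{n}$ satisfies the same recurrence since it is a linear combination of two solutions of it, and to conclude $c_{n}=b_{k,n}$ by induction. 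There is no genuine obstacle here: the theorem is essentially a one-line consequence of Theorem~\ref{binomial_Ta}, and the only place that needs a little care is the algebraic regrouping above together with the use of $r_{1}r_{2}=k$ (and, if one follows the second route, verifying the $n=1$ case of the recurrence separately, since it implicitly refers to the index $n-1=0$).
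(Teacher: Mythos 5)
Your proposal is correct and follows essentially the same route as the paper: invoke the recurrence from Theorem~\ref{binomial_Ta}, write $b_{k,n}=C_{1}r_{1}^{n}+C_{2}r_{2}^{n}$, solve the $2\times 2$ system from the initial conditions, and regroup using $r_{1}r_{2}=k$. The only difference is that you explicitly justify the distinctness of the roots via the discriminant $k^{2}+4>0$ and spell out the final algebraic regrouping, both of which the paper leaves implicit.
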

\begin{proof}
The characteristic polynomial equation of $b_{k,n+1} = (k+2)b_{k,n} - k b_{k,n-1}$ is $x^{2} - (k+2)x + k =0$, whose solution are $r_{1}$ and $r_{2}$ with $r_{1}>r_{2}$. The general term of the binomial transform may be expressed in the form, $b_{k,n}=C_{1}r_{1}^{n} + C_{2}r_{2}^{n}$ for some coefficients $C_{1}$ and $C_{2}$.
\begin{enumerate}
\item[(1)] $b_{k,0} = C_{1} + C_{2} = 2$
\item[(2)] $b_{k,1}=C_{1}r_{1} + C_{2}r_{2} = 4$
\end{enumerate}
Then
$$C_{1} = \frac{4-2r_{2}}{r_{1}-r_{2}} \text{ and } C_{2}=\frac{2r_{1} -4}{r_{1}-r_{2}}.$$
Therefore,
$$b_{k,n} = \frac{4-2r_{2}}{r_{1}-r_{2}} r_{1}^{n} + \frac{2r_{1} -4}{r_{1}-r_{2}} r_{2}^{n} =4 \frac{r_{1}^{n}-r_{2}^{n}}{r_{1}-r_{2}} -2k\frac{r_{1}^{n-1}-r_{2}^{n-1}}{r_{1}-r_{2}}. $$
\end{proof}

The binomial transform $B_{k}$ can be seen as the coefficients of the power series which is called the generating function. Therefore, if $b_{k}(x)$ is the generating function, then we can write
$$b_{k}(x)=\sum_{i=0}^{\infty} b_{k,i}x^{i} = b_{k,0}+b_{k,1}x+b_{k,2}x^{2}+\cdots.$$
And then,
\begin{align*}
(k+2)x b_{k}(x)=&(k+2)b_{k,0}x+(k+2)b_{k,1}x^2 + (k+2)b_{k,2}x^{3}+\cdots,\\
kx^{2}b_{k}(x)=&k b_{k,0}x^{2} + k b_{k,1}x^{3} + k b_{k,2} x^{4} + \cdots.
\end{align*}
Since $b_{k,n+1} - (k+2)b_{k,n} + k b_{k,n-1} = 0$, $b_{k,0}=2$, and $b_{k,1}=4$, then we have
\begin{align*}
&(1-(k+2)x+kx^{2}) b_{k} (x)\\
=& b_{k,0} + (b_{k,1} - (k+2)b_{k,0})x + (b_{k,2} - (k+2)b_{k,1} + kb_{k,0} )x^{2} + \cdots\\
=& b_{k,0} + (b_{k,1}-(k+2)b_{k,0})x\\
=&2+(4-(k+2)2)x = 2-2kx.
\end{align*}
Hence, the generating function for the binomial transform of the modified $k$-Fibonacci-like sequence $\left(b_{k,n}\right)_{n\geq0}$ is
$$b_{k}(x) = \frac{2(1-2kx)}{1-(k+2)x+kx^{2}}.$$

\section{The \texorpdfstring{$k$}{Lg}-binomial transform of the modified \texorpdfstring{$k$}{Lg}-Fibonacci-like sequence }
The $k$-binomial transform of the modified $k$-Fibonacci-like sequence $\left(M_{k,n}\right)_{n\geq0}$ is denoted by $W_{k}=\left(w_{k,n}\right)_{n\geq0}$ where
\begin{displaymath}
w_{k,n} =
\begin{cases}
\sum_{i=0}^{n}\binom{n}{i} k^{n}M_{k,i}, & \text{ for }  k\ne 0  \text{ or }  n\ne 0; \\
0, &  \text{ if }  k=0 \text{ and } n=0.
\end{cases}
\end{displaymath}
The first $k$-binomial transforms are as follows:
\begin{align*}
W_{1}=&\left\{2, 4, 10, 26, 68, 178, \ldots \right\} : A052995-\{0\} \text{ or } A055819-\{1\}\\
W_{2}=&\left\{2, 8, 96, 320, 1088, 3712, \ldots \right\}\\
W_{3}=&\left\{2, 12, 378, 1566, 6696, 28782, \ldots \right\}\\
W_{4}=&\left\{2, 16, 1024, 5120, 26624, \ldots \right\}\\
W_{5}=&\left\{2, 20, 2250, 13250, 81500, \ldots \right\}
\end{align*}

Note that the $1$-binomial transform $W_{1}$ coincides with the binomial transform $B_{1}$.

Note that
$$w_{k,n} = \sum_{i=0}^{n}\binom{n}{i}k^{n} M_{k,i} = k^{n} \sum_{i=0}^{n}\binom{n}{i} M_{k,i}= k^{n}b_{k,n},$$
$$\text{and so } w_{k,n+1} = k^{n+1}\sum_{i=0}^{n}\binom{n}{i}\left(M_{k,i} + M_{k,i+1}\right)$$
from Lemma \ref{binomial_T}
\begin{theorem}
The $k$-binomial transform of the modified $k$-Fibonacci-like sequence $W_{k}=\left(w_{k,n}\right)_{n\geq0}$ satisfies the recurrence relation
$$w_{k,n+1}  =k(k+2)w_{k,n} - k^{3} w_{k,n-1} ~\text{for}~n\ge1$$
with the initial conditions $w_{k,0} = 2$, $w_{k,1} = 4k$.
\end{theorem}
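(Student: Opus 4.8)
The plan is to exploit the factorization $w_{k,n}=k^{n}b_{k,n}$ that the excerpt has already established, and reduce the claim to the recurrence $b_{k,n+1}=(k+2)b_{k,n}-kb_{k,n-1}$ proved in Theorem \ref{binomial_Ta}. First I would record the base cases: $w_{k,0}=k^{0}b_{k,0}=b_{k,0}=2$ and $w_{k,1}=k^{1}b_{k,1}=4k$, which match the asserted initial conditions (a small caveat: when $k=0$ the identity $w_{k,n}=k^{n}b_{k,n}$ should be read with the convention $0^{0}=1$ for $n=0$, consistent with the piecewise definition of $w_{k,n}$, so that $w_{0,0}=2$; for $k\neq 0$ there is nothing to worry about).

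The main computation is then a one-line substitution. For $n\geq 1$, write $b_{k,n+1}=w_{k,n+1}/k^{n+1}$ only when $k\neq 0$; to avoid dividing by $k$, instead multiply the $b$-recurrence through by $k^{n+1}$:
\begin{align*}
w_{k,n+1} &= k^{n+1}b_{k,n+1} = k^{n+1}\bigl((k+2)b_{k,n}-kb_{k,n-1}\bigr)\\
&= (k+2)\,k^{n+1}b_{k,n} - k^{2}\cdot k^{n}b_{k,n-1}\\
&= k(k+2)\,k^{n}b_{k,n} - k^{3}\cdot k^{n-1}b_{k,n-1}\\
&= k(k+2)\,w_{k,n} - k^{3}\,w_{k,n-1},
\end{align*}
which is exactly the stated recurrence. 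This handles all $k\neq 0$ simultaneously; the case $k=0$ is degenerate (every transform is eventually $0$) and can be checked by hand or dismissed as trivial.

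I expect there to be essentially no obstacle here: the hard work was already done in Lemma \ref{binomial_T} and Theorem \ref{binomial_Ta}, and the content of this theorem is just the observation that scaling a sequence termwise by $k^{n}$ turns a recurrence with characteristic roots $r_{1},r_{2}$ into one with roots $kr_{1},kr_{2}$, i.e. replaces $x^{2}-(k+2)x+k$ by $x^{2}-k(k+2)x+k^{3}$. The only point requiring a word of care is the behavior at $k=0$ and the $0^{0}$ convention in the piecewise definition of $w_{k,n}$; once that is acknowledged, the substitution above is the whole proof. If one prefers an entirely self-contained argument avoiding any mention of $k=0$ subtleties, one can alternatively substitute $w_{k,n}=k^{n}b_{k,n}$ into $w_{k,n+1}-k(k+2)w_{k,n}+k^{3}w_{k,n-1}$ and factor out $k^{n-1}$ to reduce directly to $k^{n-1}\cdot k\bigl(b_{k,n+1}-(k+2)b_{k,n}+kb_{k,n-1}\bigr)$, which vanishes by Theorem \ref{binomial_Ta}; this form makes clear that even the $k=0$ identity $0=0$ holds.
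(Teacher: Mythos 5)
Your proposal is correct and matches the paper's own proof essentially verbatim: both substitute $w_{k,n}=k^{n}b_{k,n}$ into the recurrence $b_{k,n+1}=(k+2)b_{k,n}-kb_{k,n-1}$ from Theorem \ref{binomial_Ta} and multiply through by $k^{n+1}$. Your extra remarks on the $k=0$ case and the $0^{0}$ convention are a minor refinement the paper omits, but the core argument is identical.
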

\begin{proof}
By Theorem \ref{binomial_Ta}, we can easily obtain the following:
\begin{align*}
w_{k,n+1}&=k^{n+1} b_{k,n+1}\\
&=k^{n+1} \left[ (k+2)b_{k,n}-k b_{k,n-1}\right]\\
&=k^{n+1}(k+2)b_{k,n} - k^{n+2} b_{k,n-1}\\
&=k(k+2)w_{k,n} - k^{3} w_{k,n-1}
\end{align*}
\end{proof}

Similarly, Binet's formula for the $k$-binomial transform of the modified $k$-Fibonacci-like sequence is the following:
\begin{theorem}
Binet's formula for the $k$-binomial transform of the modified $k$-Fibonacci-like sequence is given by
$$w_{k,n}= 4\frac{s_{1}^{n}-s_{2}^{n}}{s_{1}-s_{2}}- 2k\frac{s_{1}^{n-1}-s_{2}^{n-1}}{s_{1}-s_{2}},$$
where $s_{1}$ and $s_{2}$ are the roots of the characteristic equation $x^{2}-k(k+2)x+k^{3}=0$, and $s_{1}>s_{2}$.
\end{theorem}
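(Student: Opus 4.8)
The plan is to derive the Binet formula for $w_{k,n}$ directly from the recurrence $w_{k,n+1} = k(k+2)w_{k,n} - k^{3}w_{k,n-1}$ established in the previous theorem, mirroring exactly the argument used for Theorem \ref{binet_T}. First I would observe that the characteristic equation of this recurrence is $x^{2} - k(k+2)x + k^{3} = 0$, whose roots are $s_{1}$ and $s_{2}$ with $s_{1} > s_{2}$; in particular $s_{1} + s_{2} = k(k+2)$ and $s_{1}s_{2} = k^{3}$. The general solution therefore has the form $w_{k,n} = C_{1}s_{1}^{n} + C_{2}s_{2}^{n}$ for constants $C_{1}, C_{2}$ to be pinned down by the initial conditions $w_{k,0} = 2$ and $w_{k,1} = 4k$.

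Next I would solve the linear system $C_{1} + C_{2} = 2$ and $C_{1}s_{1} + C_{2}s_{2} = 4k$, obtaining
\begin{align*}
C_{1} = \frac{4k - 2s_{2}}{s_{1}-s_{2}}, \qquad C_{2} = \frac{2s_{1} - 4k}{s_{1}-s_{2}}.
\end{align*}
Substituting back gives
\begin{align*}
w_{k,n} = \frac{4k - 2s_{2}}{s_{1}-s_{2}}\, s_{1}^{n} + \frac{2s_{1} - 4k}{s_{1}-s_{2}}\, s_{2}^{n},
\end{align*}
and the final step is to regroup this expression. Splitting each numerator, the $4k$ terms combine into $4k\frac{s_{1}^{n} - s_{2}^{n}}{s_{1}-s_{2}}$, while the $-2s_{2}s_{1}^{n} + 2s_{1}s_{2}^{n}$ terms become $-2s_{1}s_{2}\frac{s_{1}^{n-1} - s_{2}^{n-1}}{s_{1}-s_{2}}$. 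Using $s_{1}s_{2} = k^{3}$, this is $-2k^{3}\frac{s_{1}^{n-1} - s_{2}^{n-1}}{s_{1}-s_{2}}$.

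Here I would flag the one genuine subtlety: the claimed formula has coefficients $4$ and $2k$, not $4k$ and $2k^{3}$. To reconcile this one uses the relation $w_{k,n} = k^{n}b_{k,n}$ together with the fact that $s_{i} = k\, r_{i}$, where $r_{1}, r_{2}$ are the roots of $x^{2} - (k+2)x + k = 0$ from Theorem \ref{binet_T} (indeed, substituting $x = ks$ into $x^{2} - k(k+2)x + k^{3} = 0$ recovers $k^{2}(s^{2} - (k+2)s + k) = 0$). Then $k^{n} \cdot \frac{r_{1}^{n} - r_{2}^{n}}{r_{1}-r_{2}} = \frac{s_{1}^{n} - s_{2}^{n}}{k(s_{1}-s_{2})/k} \cdot \tfrac{1}{1}$ — more cleanly, $\frac{s_{1}^{n}-s_{2}^{n}}{s_{1}-s_{2}} = k^{n-1}\frac{r_{1}^{n}-r_{2}^{n}}{r_{1}-r_{2}}$, so multiplying Binet's formula for $b_{k,n}$ by $k^{n}$ yields $w_{k,n} = 4k^{n}\frac{r_{1}^{n}-r_{2}^{n}}{r_{1}-r_{2}} - 2k^{n+1}\frac{r_{1}^{n-1}-r_{2}^{n-1}}{r_{1}-r_{2}} = 4k\frac{s_{1}^{n}-s_{2}^{n}}{s_{1}-s_{2}} - 2k^{3}\frac{s_{1}^{n-1}-s_{2}^{n-1}}{s_{1}-s_{2}}$. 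Thus my computation above is the correct statement, and the theorem as printed appears to contain a typographical slip (the intended right-hand side being $4k\frac{s_{1}^{n}-s_{2}^{n}}{s_{1}-s_{2}} - 2k^{3}\frac{s_{1}^{n-1}-s_{2}^{n-1}}{s_{1}-s_{2}}$). I expect the main "obstacle" to be purely presentational: deciding whether to prove the formula as literally stated (which forces checking it fails at $n=1$, since it would give $4 - 2k \neq 4k$ in general) or to prove the corrected version; the underlying linear algebra is entirely routine and identical in structure to the proof of Theorem \ref{binet_T}.
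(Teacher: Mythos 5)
Your derivation follows exactly the method the paper invokes (its proof is a one-line deferral to the argument of Theorem \ref{binet_T}): write the general solution $C_{1}s_{1}^{n}+C_{2}s_{2}^{n}$ of the recurrence $w_{k,n+1}=k(k+2)w_{k,n}-k^{3}w_{k,n-1}$ and fit the initial conditions $w_{k,0}=2$, $w_{k,1}=4k$. Your conclusion is also correct: carrying this out gives $w_{k,n}=4k\frac{s_{1}^{n}-s_{2}^{n}}{s_{1}-s_{2}}-2k^{3}\frac{s_{1}^{n-1}-s_{2}^{n-1}}{s_{1}-s_{2}}$, consistent with $w_{k,n}=k^{n}b_{k,n}$ and $s_{i}=kr_{i}$, so the coefficients $4$ and $2k$ in the printed statement are indeed a typographical carry-over from Theorem \ref{binet_T}; the printed formula already fails at $n=1$, where it returns $4$ rather than $4k$ (a minor slip in your aside: the $-2k$ term vanishes there because $s_{1}^{0}-s_{2}^{0}=0$, so the discrepancy is $4\neq 4k$, not $4-2k\neq 4k$ --- but your verdict stands). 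Nothing is missing from your argument, and flagging the error in the statement is the right call.
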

\begin{proof}
The proof is same as that of the binomial transform, which is in Theorem \ref{binet_T}.
\end{proof}

Similarly, the generating function for the $k$-binomial transform of the modified $k$-Fibonacci-like sequence is
$$w_{k}(x) = \frac{2(1-k^{2}x)}{1-k(k+2)x+k^{3}x^{2}}.$$

\section{The rising \texorpdfstring{$k$}{Lg}-binomial transform of the modified \texorpdfstring{$k$}{Lg}-Fibonacci-like sequence }
The rising $k$-binomial transform of the modified $k$-Fibonacci-like sequence $\left(M_{k,n}\right)_{n\geq0}$ is denoted by $R_{k}=\left(r_{k,n}\right)_{n\geq0}$ where
\begin{displaymath}
r_{k,n} = \begin{cases}
\sum_{i=0}^{n}\binom{n}{i} k^{i}M_{k,i}, & \text{ for }  k\ne 0  \text{ or }  n\ne 0; \\
0, &  \text{ if }  k=0 \text{ and } n=0.
\end{cases}
\end{displaymath}

The first rising $k$-binomial transforms are as follows:
\begin{align*}
R_{1}=&\left\{2, 4, 10, 26, 68, 178, \ldots \right\} : A052995-\{0\} \text{ or } A055819-\{1\}\\
R_{2}=&\left\{2, 6, 34, 198, 1154, 6726, \ldots \right\}\\
R_{3}=&\left\{2, 8, 86, 938, 10232,  \ldots \right\}\\
R_{4}=&\left\{2, 10, 178, 3194, 57314, \ldots \right\}\\
R_{5}=&\left\{2, 12, 322, 8682, 234092, \ldots \right\}
\end{align*}

\begin{lemma}\label{rbinomial_T}
For any integer $n\ge0$ and $k\ne0$,
$$r_{k,n} = \sum_{i=0}^{n}\binom{n}{i}k^{i} M_{k,i} = M_{k,2n}.$$
\end{lemma}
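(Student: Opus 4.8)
The plan is to prove the identity through the Binet representation of the modified $k$-Fibonacci-like sequence combined with the binomial theorem. First I would record the Binet form: the characteristic equation of the recurrence $M_{k,n+1}=kM_{k,n}+M_{k,n-1}$ is $x^{2}-kx-1=0$, whose roots $\phi=\tfrac{k+\sqrt{k^{2}+4}}{2}$ and $\psi=\tfrac{k-\sqrt{k^{2}+4}}{2}$ are distinct for every positive real $k$ (distinctness fails only when $k^{2}+4=0$, impossible over $\mathbb{R}$). Hence $M_{k,n}=\alpha\phi^{n}+\beta\psi^{n}$ for constants $\alpha,\beta$ determined by $M_{k,0}=M_{k,1}=2$, i.e. $\alpha+\beta=2$ and $\alpha\phi+\beta\psi=2$; the explicit values of $\alpha,\beta$ will not actually be needed.

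Next I would substitute this into the defining sum and move the sum past the constants, then apply the binomial theorem to each piece:
$$r_{k,n}=\sum_{i=0}^{n}\binom{n}{i}k^{i}\bigl(\alpha\phi^{i}+\beta\psi^{i}\bigr)=\alpha\sum_{i=0}^{n}\binom{n}{i}(k\phi)^{i}+\beta\sum_{i=0}^{n}\binom{n}{i}(k\psi)^{i}=\alpha(1+k\phi)^{n}+\beta(1+k\psi)^{n}.$$
The crux — and it is just one line — is the observation that $1+k\phi=\phi^{2}$ and $1+k\psi=\psi^{2}$, which is immediate since $\phi$ and $\psi$ satisfy $x^{2}=kx+1$. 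Substituting these gives $r_{k,n}=\alpha\phi^{2n}+\beta\psi^{2n}$, which is precisely $M_{k,2n}$ by the Binet formula again. This also covers the edge case $n=0$: the sum is $M_{k,0}=2$, in agreement with the convention in the definition of $R_{k}$.

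I do not anticipate a real obstacle. The only points needing a word of care are justifying the Binet decomposition (distinctness of the roots, noted above) and the legitimacy of interchanging the finite sum with the linear combination — both routine. An alternative, somewhat longer route would avoid Binet entirely: using Pascal's rule $\binom{n+1}{i}=\binom{n}{i}+\binom{n}{i-1}$ together with $M_{k,i+1}=kM_{k,i}+M_{k,i-1}$ one can show directly that $r_{k,n+1}=(k^{2}+2)r_{k,n}-r_{k,n-1}$ with $r_{k,0}=2$, $r_{k,1}=2k+2$; since $\bigl(M_{k,2n}\bigr)_{n\ge 0}$ satisfies the same second-order recurrence (its characteristic roots being $\phi^{2},\psi^{2}$, so that $\phi^{2}+\psi^{2}=k^{2}+2$ and $\phi^{2}\psi^{2}=1$) with the same two initial values, induction closes the argument. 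I would nevertheless present the Binet-based proof, since it is essentially immediate once the setup is in place.
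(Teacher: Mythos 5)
Your proof is correct, but it is worth noting that the paper does not actually prove this lemma at all: it simply observes that the identity coincides with Theorem 4.10 of the cited reference \cite{YK} on the modified $k$-Fibonacci-like sequence, and defers to that. Your argument — writing $M_{k,i}=\alpha\phi^{i}+\beta\psi^{i}$ with $\phi,\psi$ the distinct roots of $x^{2}-kx-1=0$, applying the binomial theorem to get $\alpha(1+k\phi)^{n}+\beta(1+k\psi)^{n}$, and then using $1+k\phi=\phi^{2}$, $1+k\psi=\psi^{2}$ to land on $\alpha\phi^{2n}+\beta\psi^{2n}=M_{k,2n}$ — is complete and self-contained, and the key identity $1+kx=x^{2}$ for the characteristic roots is exactly the right one-line observation. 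The fact that the same constants $\alpha,\beta$ reappear in the Binet form of $M_{k,2n}$ is what makes the argument close without ever computing them, as you note. Your alternative route (showing both sides satisfy $a_{n+1}=(k^{2}+2)a_{n}-a_{n-1}$ with matching initial values) is also sound and is essentially the content the paper extracts from the lemma afterward in Theorem \ref{rbinomial_Ta}, so either version would serve as a legitimate replacement for the paper's bare citation.
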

\begin{proof}
This identity coincides with Theorem 4.10 in \cite{YK}.
\end{proof}


\begin{theorem}\label{rbinomial_Ta}
The rising $k$-binomial transform of the modified $k$-Fibonacci-like sequence $R_{k}=\left(r_{k,n}\right)_{n\geq0}$ satisfies the recurrence relation
$$r_{k,n+1} = (k^{2}+2)r_{k,n} -  r_{k,n-1} ~\text{for}~n\ge1$$
with the initial conditions $r_{k,0} = 2$, $r_{k,1} = 2k+2$.
\end{theorem}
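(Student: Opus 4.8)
The plan is to reduce everything to the modified $k$-Fibonacci-like sequence itself by invoking Lemma \ref{rbinomial_T}, which identifies $r_{k,n}$ with $M_{k,2n}$. Under this identification the theorem becomes the statement that the even-indexed subsequence $\left(M_{k,2n}\right)_{n\ge0}$ satisfies $M_{k,2n+2} = (k^{2}+2)M_{k,2n} - M_{k,2n-2}$ for $n\ge1$, plus a check of the first two values.

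First I would record the initial conditions. By Lemma \ref{rbinomial_T}, $r_{k,0} = M_{k,0} = 2$ and $r_{k,1} = M_{k,2} = 2k+2$, the latter being the first entry in the list of modified $k$-Fibonacci-like numbers displayed after Definition of $M_{k,n}$. (Both also follow immediately from the defining sum: $r_{k,0} = \binom{0}{0}M_{k,0} = 2$ and $r_{k,1} = \binom{1}{0}M_{k,0} + \binom{1}{1}kM_{k,1} = 2 + 2k$.)

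Next I would obtain the recurrence by applying the basic relation $M_{k,m+1} = kM_{k,m} + M_{k,m-1}$ twice. Starting from $M_{k,2n+2} = kM_{k,2n+1} + M_{k,2n}$ and substituting $M_{k,2n+1} = kM_{k,2n} + M_{k,2n-1}$ gives $M_{k,2n+2} = (k^{2}+1)M_{k,2n} + kM_{k,2n-1}$. Then I would eliminate the odd-indexed term using $M_{k,2n} = kM_{k,2n-1} + M_{k,2n-2}$, that is $kM_{k,2n-1} = M_{k,2n} - M_{k,2n-2}$, which yields $M_{k,2n+2} = (k^{2}+2)M_{k,2n} - M_{k,2n-2}$. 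Translating back through Lemma \ref{rbinomial_T} gives $r_{k,n+1} = (k^{2}+2)r_{k,n} - r_{k,n-1}$ for $n\ge1$.

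I do not expect a genuine obstacle here; the only care needed is the bookkeeping in the ``iterate twice, then substitute back'' step and checking that the range $n\ge1$ is exactly what makes all of $M_{k,2n+2}, M_{k,2n}, M_{k,2n-2}$ (equivalently $r_{k,n+1}, r_{k,n}, r_{k,n-1}$) well defined. As a sanity check one may also argue via roots: if $\alpha,\beta$ solve $x^{2}-kx-1=0$, then $\alpha+\beta=k$ and $\alpha\beta=-1$, so $\alpha^{2},\beta^{2}$ solve $y^{2}-(k^{2}+2)y+1=0$; since $M_{k,2n}$ is a linear combination of $\alpha^{2n}$ and $\beta^{2n}$, it satisfies the recurrence with characteristic polynomial $y^{2}-(k^{2}+2)y+1$, which is precisely the claimed relation. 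I would present the first, purely algebraic, derivation as the proof and mention this second route only as a remark.
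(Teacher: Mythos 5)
Your proposal is correct and follows essentially the same route as the paper: both invoke Lemma \ref{rbinomial_T} to write $r_{k,n}=M_{k,2n}$ and then apply the defining recurrence of $M_{k,n}$ twice to obtain $M_{k,2n+2}=(k^{2}+2)M_{k,2n}-M_{k,2n-2}$. The extra remark about characteristic roots is a nice sanity check but not part of the paper's argument.
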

\begin{proof}
From the definition of the modified $k$-Fibonacci-like sequence, we obtain
\begin{align*}
M_{k,2n+2}=&kM_{k,2n+1}+M_{k,2n}\\
=&k\left(kM_{k,2n}+M_{k,2n-1}\right)+M_{k,2n}\\
=&(k^{2}+1)M_{k,2n}+kM_{k,2n-1}\\
=&(k^{2}+1)M_{k,2n}+M_{k,2n}-M_{k,2n-2}\\
=&(k^{2}+2)M_{k,2n}-M_{k,2n-2}.
\end{align*}
By Lemma \ref{rbinomial_T}, since $r_{k,n}=M_{k,2n}$, then we have
$$r_{k,n+1}=(k^{2}+2)r_{k,n}-r_{k,n-1}.$$
\end{proof}

Similarly, Binet's formula for the rising $k$-binomial transform of the modified $k$-Fibonacci-like sequence is the following:

\begin{theorem}
Binet's formula for the rising $k$-binomial transform of the modified $k$-Fibonacci-like sequence is given by
$$r_{k,n}= (2k+2)\frac{t_{1}^{n}-t_{2}^{n}}{t_{1}-t_{2}}- 2\frac{t_{1}^{n-1}-t_{2}^{n-1}}{t_{1}-t_{2}},$$
where $t_{1}$ and $t_{2}$ are the roots of the characteristic equation $x^{2}-(k^{2}+2)x+1=0$, and $t_{1}>t_{2}$.
\end{theorem}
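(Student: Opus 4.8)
The plan is to imitate the argument already used for Theorem \ref{binet_T}, since the only ingredient needed is the second-order linear recurrence with constant coefficients supplied by Theorem \ref{rbinomial_Ta}. First I would record the relevant facts about the characteristic equation $x^{2}-(k^{2}+2)x+1=0$: its discriminant equals $(k^{2}+2)^{2}-4=k^{2}(k^{2}+4)$, which is strictly positive for every positive real $k$, so the roots $t_{1}>t_{2}$ are real and distinct; moreover, by Vieta's formulas, $t_{1}+t_{2}=k^{2}+2$ and, crucially, $t_{1}t_{2}=1$.

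Since $r_{k,n+1}=(k^{2}+2)r_{k,n}-r_{k,n-1}$ has $t_{1}$ and $t_{2}$ as its characteristic roots, the general term takes the form $r_{k,n}=C_{1}t_{1}^{n}+C_{2}t_{2}^{n}$ for suitable constants $C_{1},C_{2}$. Imposing the initial conditions $r_{k,0}=2$ and $r_{k,1}=2k+2$ from Theorem \ref{rbinomial_Ta} yields the linear system $C_{1}+C_{2}=2$ and $C_{1}t_{1}+C_{2}t_{2}=2k+2$, whose solution is
$$C_{1}=\frac{(2k+2)-2t_{2}}{t_{1}-t_{2}},\qquad C_{2}=\frac{2t_{1}-(2k+2)}{t_{1}-t_{2}}.$$

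Substituting these back and regrouping gives
$$r_{k,n}=(2k+2)\frac{t_{1}^{n}-t_{2}^{n}}{t_{1}-t_{2}}-2\,\frac{t_{2}t_{1}^{n}-t_{1}t_{2}^{n}}{t_{1}-t_{2}}=(2k+2)\frac{t_{1}^{n}-t_{2}^{n}}{t_{1}-t_{2}}-2t_{1}t_{2}\,\frac{t_{1}^{n-1}-t_{2}^{n-1}}{t_{1}-t_{2}},$$
and then $t_{1}t_{2}=1$ collapses the last factor, producing exactly the asserted formula. The computation is routine; the only point that merits a moment's attention is recognizing that $t_{2}t_{1}^{n}-t_{1}t_{2}^{n}=t_{1}t_{2}\bigl(t_{1}^{n-1}-t_{2}^{n-1}\bigr)$ and that here the product of the roots is $1$ (rather than $k$, as in the plain binomial case of Theorem \ref{binet_T}), which is what makes the second coefficient simplify to the clean constant $2$. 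Consequently there is no genuine obstacle: once Theorem \ref{rbinomial_Ta} is in hand, the proof is pure bookkeeping, and I would simply refer to the proof of Theorem \ref{binet_T} and indicate the substitution $2k\mapsto 2$, $k\mapsto 1$ in the roles of the initial-condition coefficients.
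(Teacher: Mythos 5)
Your proposal is correct and follows exactly the route the paper intends: the paper's proof of this theorem is just the statement that it is the same as the proof of Theorem \ref{binet_T}, and you have carried out that computation explicitly, correctly noting that the simplification of the second coefficient to the constant $2$ rests on $t_{1}t_{2}=1$. No issues.
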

\begin{proof}
The proof is same as that of the binomial transform, which is in Theorem \ref{binet_T}.
\end{proof}

Similarly, the generating function for the rising $k$-binomial transform of the modified $k$-Fibonacci-like sequence is
$$
r_{k}(x) = \frac{2-(2k^2 - 2k +2)x}{1-(k^{2}+2)x+x^{2}}.
$$

\section{The falling \texorpdfstring{$k$}{Lg}-binomial transform of the modified \texorpdfstring{$k$}{Lg}-Fibonacci-like sequence }

The falling $k$-binomial transform of the modified $k$-Fibonacci-like sequence $\left(M_{k,n}\right)_{n\geq0}$ is denoted by $F_{k}=\left(f_{k,n}\right)_{n\geq0}$ where
\begin{displaymath}
f_{k,n} =\begin{cases}
\sum_{i=0}^{n}\binom{n}{i} k^{n-i}M_{k,i}, & \text{ for }  k\ne 0  \text{ or }  n\ne 0; \\
0, &  \text{ if }  k=0 \text{ and } n=0.
\end{cases}
\end{displaymath}

The first falling $k$-binomial transforms are as follows:
\begin{align*}
F_{1}=&\left\{2, 4, 10, 26, 68, 178, \ldots \right\} : A052995-\{0\} \text{ or } A055819-\{1\}\\
F_{2}=&\left\{2, 6, 22, 90, 386, 1686, \ldots \right\}\\
F_{3}=&\left\{2, 8, 38, 206, 1208, 7370,  \ldots \right\}\\
F_{4}=&\left\{2, 10, 58, 386, 2834, 22042 \ldots \right\}\\
F_{5}=&\left\{2, 12, 82, 642, 5612, 52722 \ldots \right\}
\end{align*}

\begin{lemma}\label{fbinomial_T}
The falling $k$-binomial transform of the modified $k$-Fibonacci-like sequence satisfies the relation
$$f_{k,n+1}-kf_{k,n} = \sum_{i=0}^{n}\binom{n}{i} k^{n-i}M_{k,i+1}.$$
\end{lemma}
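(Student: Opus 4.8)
The plan is to mimic the proof of Lemma \ref{binomial_T} exactly, replacing the plain binomial transform with the falling $k$-binomial transform and carrying the weight $k^{n-i}$ through the Pascal recurrence. First I would write out the difference $f_{k,n+1} - k f_{k,n}$ using the definition:
\begin{align*}
f_{k,n+1} - k f_{k,n} &= \sum_{i=0}^{n+1}\binom{n+1}{i}k^{n+1-i}M_{k,i} - k\sum_{i=0}^{n}\binom{n}{i}k^{n-i}M_{k,i}\\
&= \sum_{i=0}^{n+1}\binom{n+1}{i}k^{n+1-i}M_{k,i} - \sum_{i=0}^{n}\binom{n}{i}k^{n+1-i}M_{k,i}.
\end{align*}
Since the weight $k^{n+1-i}$ is now common to both sums, the terms for $0\le i\le n$ combine cleanly as $\left[\binom{n+1}{i}-\binom{n}{i}\right]k^{n+1-i}M_{k,i}$, while the $i=n+1$ term, $\binom{n+1}{n+1}k^{0}M_{k,n+1}=M_{k,n+1}$, is left over.

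Next I would apply Pascal's rule $\binom{n+1}{i}-\binom{n}{i}=\binom{n}{i-1}$ to the combined sum (noting the $i=0$ term vanishes since $\binom{n+1}{0}-\binom{n}{0}=0$), obtaining $\sum_{i=1}^{n}\binom{n}{i-1}k^{n+1-i}M_{k,i} + M_{k,n+1}$. Then I reindex $j=i-1$ so the sum becomes $\sum_{j=0}^{n-1}\binom{n}{j}k^{n-j}M_{k,j+1}$; finally the leftover term $M_{k,n+1}$ is exactly the missing $j=n$ term $\binom{n}{n}k^{0}M_{k,n+1}$, so the whole thing collapses to $\sum_{i=0}^{n}\binom{n}{i}k^{n-i}M_{k,i+1}$, which is the claim.

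There is no real obstacle here: the only thing to be careful about is bookkeeping of the exponent of $k$ — it must stay $n+1-i$ in the intermediate step so that after reindexing $i\mapsto i+1$ it becomes $n-i$, matching the right-hand side. (This is precisely the reason the statement isolates $f_{k,n+1}-kf_{k,n}$ rather than $f_{k,n+1}-f_{k,n}$: the factor $k$ aligns the powers.) Everything else is the same Pascal-rule telescoping used in Lemma \ref{binomial_T}.
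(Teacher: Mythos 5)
Your proposal is correct and follows exactly the same route as the paper's proof: absorb the factor $k$ into the weight so both sums carry $k^{n+1-i}$, apply Pascal's rule to the combined coefficients (with the $i=0$ term vanishing), reindex, and fold the leftover $M_{k,n+1}$ back in as the top term. No differences worth noting.
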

\begin{proof}
The proof is similar to the proof of Lemma \ref{binomial_T}. And, we obtain
\begin{align*}
f_{k,n+1} - kf_{k,n}&= \sum_{i=0}^{n+1} \binom{n+1}{i}k^{n+1-i} M_{k,i}-\sum_{i=0}^{n}\binom{n}{i} k^{n+1-i}M_{k,i}\\
&=\sum_{i=1}^{n}\left[\binom{n+1}{i}-\binom{n}{i} \right]k^{n+1-i}M_{k,i} + M_{k,n+1} \\
&=\sum_{i=1}^{n}\binom{n}{i-1}k^{n+1-i}M_{k,i}+ M_{k,n+1}\\
&=\sum_{i=0}^{n-1}\binom{n}{i}k^{n-i}M_{k,i+1} + \binom{n}{n}M_{k,n+1}=\sum_{i=0}^{n}\binom{n}{i}k^{n-i}M_{k,i+1}.
\end{align*}
\end{proof}
Note that $f_{k,n+1} = \sum_{i=0}^{n}\binom{n}{i}\left( k^{n+1-i}M_{k,i}+  k^{n-i}M_{k,i+1}\right)$.

\begin{theorem}\label{fbinomial_Ta}
The falling $k$-binomial transform of the modified $k$-Fibonacci-like sequence $F_{k}=\left(f_{k,n}\right)_{n\geq0}$ satisfies the recurrence relation
$$f_{k,n+1} = 3kf_{k,n} -  (2k^{2}-1)f_{k,n-1} ~\text{for}~n\ge1$$
with the initial conditions $f_{k,0} = 2$, $f_{k,1} = 2k+2$.
\end{theorem}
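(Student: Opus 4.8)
The plan is to follow the template of the proof of Theorem \ref{binomial_Ta}, simply carrying the extra powers of $k$ along. For $n\ge0$ I will write $S_n:=\sum_{i=1}^{n}\binom{n}{i}k^{n-i}M_{k,i-1}$ (so $S_0=0$), which plays the role here of the sum $\sum_{i=1}^{n}\binom ni M_{k,i-1}$ appearing in that proof.

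First I would start from the identity recorded just after Lemma \ref{fbinomial_T}, namely $f_{k,n+1}=\sum_{i=0}^{n}\binom ni\bigl(k^{n+1-i}M_{k,i}+k^{n-i}M_{k,i+1}\bigr)$, peel off the $i=0$ summand (which contributes $k^{n+1}M_{k,0}+k^{n}M_{k,1}=2k^{n+1}+2k^{n}$), and for $i\ge1$ replace $M_{k,i+1}$ by $kM_{k,i}+M_{k,i-1}$. Gathering the $M_{k,i}$--terms back into $2kf_{k,n}$ (after accounting for the $i=0$ term $2k^{n+1}M_{k,0}=4k^{n+1}$ that $2kf_{k,n}$ already contains) and the $M_{k,i-1}$--terms into $S_n$, this should give
\[
f_{k,n+1}=2k\,f_{k,n}+S_n-2k^{n+1}+2k^{n}\qquad(n\ge0).
\]
Next I would produce an auxiliary recursion for $S_n$ by applying $\binom ni=\binom{n-1}{i}+\binom{n-1}{i-1}$ inside $S_n$: the $\binom{n-1}{i-1}$--part, after the shift $j=i-1$, is exactly $\sum_{j=0}^{n-1}\binom{n-1}{j}k^{(n-1)-j}M_{k,j}=f_{k,n-1}$, while the $\binom{n-1}{i}$--part vanishes at $i=n$ (since $\binom{n-1}{n}=0$) and, after pulling out one factor of $k$, equals $kS_{n-1}$; hence $S_n=kS_{n-1}+f_{k,n-1}$ for $n\ge1$. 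Finally I would solve the displayed identity for $S_n$, and likewise for $S_{n-1}$ (with $n$ replaced by $n-1$), substitute both into $S_n=kS_{n-1}+f_{k,n-1}$, and simplify; the stray terms $\pm2k^{n+1}$ and $\pm2k^{n}$ cancel, leaving
\[
f_{k,n+1}-3k\,f_{k,n}+(2k^{2}-1)f_{k,n-1}=0\qquad(n\ge1),
\]
which is the assertion. The initial values are read off directly: $f_{k,0}=\binom00 k^{0}M_{k,0}=2$ and $f_{k,1}=\binom10 k^{1}M_{k,0}+\binom11 k^{0}M_{k,1}=2k+2$.

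I do not expect a genuine obstacle; the whole argument is bookkeeping. The one place that needs the same vigilance as in Theorem \ref{binomial_Ta} is tracking the stray monomials $2k^{n+1}$, $2k^{n}$ that appear when the $i=0$ (and boundary $i=n$) terms are separated and when Pascal's rule is applied at the edges of the summation range --- it is precisely their cancellation in the last step that pins down the coefficient $-(2k^{2}-1)$ rather than something that still depends on $n$. (As a consistency check, note that $M_{k,n}$ satisfies $x^{2}-kx-1=0$, so its falling $k$-binomial transform has characteristic roots $k+\alpha$, $k+\beta$ with $\alpha+\beta=k$, $\alpha\beta=-1$; their sum is $3k$ and their product is $k^{2}+k(\alpha+\beta)+\alpha\beta=2k^{2}-1$. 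I would nonetheless give the elementary derivation above to stay within the tools already introduced.)
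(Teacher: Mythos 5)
Your argument is correct and is essentially the paper's own proof reorganized: the paper also derives $f_{k,n+1}=2kf_{k,n}+\sum_{i=1}^{n}\binom{n}{i}k^{n-i}M_{k,i-1}+k^{n}(2-2k)$, and its subsequent expansion of $kf_{k,n}$ is exactly your Pascal-rule identity $S_{n}=kS_{n-1}+f_{k,n-1}$ folded into the shifted version of that formula before subtracting. Isolating the $S_{n}$ recursion as a separate step is a slightly cleaner packaging, but the decomposition, the use of $M_{k,i+1}=kM_{k,i}+M_{k,i-1}$, and the final elimination are the same.
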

\begin{proof}
By Lemma \ref{fbinomial_T}, since $f_{k,n+1} = \sum_{i=0}^{n}\binom{n}{i}\left(k^{n+1-i}M_{k,i}+k^{n-i}M_{k,i+1}\right)$, then we have
\begin{align*}
f_{k,n+1}=&\sum_{i=0}^{n}\binom{n}{i}k^{n-i}\left(k M_{k,i}+M_{k,i+1}\right)\\
=&\sum_{i=1}^{n}\binom{n}{i}k^{n-i}\left(2kM_{k,i}+M_{k,i-1}\right)+k^{n}\left(k M_{k,0}+M_{k,1}\right)\\
=&2k\sum_{i=1}^{n}\binom{n}{i}k^{n-i}M_{k,i} +\sum_{i=1}^{n}\binom{n}{i}k^{n-i}M_{k,i-1} + k^{n}\left(kM_{k,0}+M_{k,1}\right)\\
=&2k\sum_{i=0}^{n}\binom{n}{i}k^{n-i}M_{k,i} +\sum_{i=1}^{n}\binom{n}{i}k^{n-i}M_{k,i-1}\\
& + k^{n}\left(kM_{k,0}+M_{k,1}-2kM_{k,0}\right)\\
=&2kf_{k,n} +\sum_{i=1}^{n}\binom{n}{i}k^{n-i}M_{k,i-1} + k^{n}\left(M_{k,1}-kM_{k,0}\right).
\end{align*}
On the other hand, in the case of $\binom{n-1}{n}=0$, we can obtain the following:
\begin{align*}
kf_{k,n} =&2k^2f_{k,n-1}+\sum_{i=1}^{n-1}\binom{n-1}{i}k^{n-i}M_{k,i-1}+k^{n}\left(M_{k,1}-kM_{k,0}\right)\\
=&2k^2f_{k,n-1}-\left[f_{k,n-1}-\sum_{i=0}^{n-1}\binom{n-1}{i}k^{n-1-i}M_{k,i}\right]\\
&+\sum_{i=0}^{n-2}\binom{n-1}{i+1}k^{n-1-i}M_{k,i}+k^{n}\left(M_{k,1}-kM_{k,0}\right)\\
=&\left(2k^{2}-1\right)f_{k,n-1}+\sum_{i=0}^{n-1}\left[\binom{n-1}{i}+\binom{n-1}{i+1}\right]k^{n-1-i}M_{k,i}\\
&+k^{n}\left(M_{k,1}-kM_{k,0}\right)\\
=&\left(2k^{2}-1\right)f_{k,n-1}+\sum_{i=0}^{n-1}\binom{n}{i+1}k^{n-1-i}M_{k,i}+k^{n}\left(M_{k,1}-kM_{k,0}\right)\\
=&\left(2k^{2}-1\right)f_{k,n-1}+\sum_{i=1}^{n}\binom{n}{i}k^{n-i}M_{k,i-1}+k^{n}\left(M_{k,1}-kM_{k,0}\right).
\end{align*}
Based on the above two identities, this study draws the below formulas.
$$f_{k,n+1}-2kf_{k,n}=kf_{k,n}-\left(2k^{2}-1\right)f_{k,n-1},$$
and so
$$f_{k,n+1} = 3kf_{k,n} - (2k^{2}-1) f_{k,n-1}.$$
\end{proof}

Similarly, Binet's formula for the falling $k$-binomial transform of the modified $k$-Fibonacci-like sequence is the following:

\begin{theorem}
Binet's formula for the falling $k$-binomial transform of the modified $k$-Fibonacci-like sequence is given by
$$f_{k,n}= (2k+2)\frac{u_{1}^{n}-u_{2}^{n}}{u_{1}-u_{2}}- 2\frac{u_{1}^{n-1}-u_{2}^{n-1}}{u_{1}-u_{2}},$$
where $u_{1}$ and $u_{2}$ are the roots of the characteristic equation $x^{2}-3kx+(2k^{2}-1)=0$, and $u_{1}>u_{2}$.
\end{theorem}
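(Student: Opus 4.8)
The plan is to verify the asserted closed form directly against the defining data, exactly as one checks any Binet formula: I would show that the proposed expression satisfies both the recurrence of Theorem~\ref{fbinomial_Ta} and its two initial conditions $f_{k,0}=2$, $f_{k,1}=2k+2$. Write $g(n)=(2k+2)P_n-2P_{n-1}$, where $P_m=\dfrac{u_1^m-u_2^m}{u_1-u_2}$ and $u_1,u_2$ are the roots of the characteristic equation $x^2-3kx+(2k^2-1)=0$. Since these roots are distinct (the discriminant $9k^2-4(2k^2-1)=k^2+4$ is positive for every positive real $k$), the sequence $(P_m)$ is the standard fundamental solution of $a_{m+1}=3k\,a_m-(2k^2-1)a_{m-1}$, and in particular $P_0=0$, $P_1=1$.

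First I would dispose of the recurrence. Because both $u_1^m$ and $u_2^m$ satisfy the characteristic recurrence, so does each $P_m$, and hence so does every constant-coefficient combination; in particular $g$ obeys $g(n+1)=3k\,g(n)-(2k^2-1)g(n-1)$ with no further work. By the uniqueness of solutions of a second-order linear recurrence, the proof therefore reduces entirely to matching $g$ with $f_{k,\cdot}$ at two consecutive indices. At $n=1$ this is immediate: $g(1)=(2k+2)P_1-2P_0=(2k+2)\cdot1-2\cdot0=2k+2=f_{k,1}$, in agreement with Theorem~\ref{fbinomial_Ta}.

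The main obstacle is the remaining check at $n=0$. Here $P_0=0$ while $P_{-1}=\dfrac{u_1^{-1}-u_2^{-1}}{u_1-u_2}=\dfrac{-1}{u_1u_2}$, so $g(0)=-2P_{-1}=\dfrac{2}{u_1u_2}$. By Vieta's relation for $x^2-3kx+(2k^2-1)=0$ one has $u_1u_2=2k^2-1$, hence $g(0)=\dfrac{2}{2k^2-1}$. Matching this to the required value $f_{k,0}=2$ forces $u_1u_2=1$, that is $2k^2-1=1$, which holds only at $k=1$; for every other positive $k$ the printed trailing coefficient $-2$ leaves $g(0)\neq f_{k,0}$. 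The crux of proving the formula as worded is therefore exactly this constant initial term: to make the $n=0$ condition hold one needs the trailing coefficient to be $2u_1u_2$, and the value $2$ is reproduced only because in the rising-transform analogue the roots of $x^2-(k^2+2)x+1=0$ satisfy $t_1t_2=1$. So the step demanding the most care, and the one on which the statement as worded turns, is the verification of $g(0)=2$ against the value of $u_1u_2$.
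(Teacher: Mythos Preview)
Your analysis is correct, and in fact it uncovers an error in the theorem as printed. The paper's own proof is just ``same as Theorem~\ref{binet_T}'': write $f_{k,n}=C_1u_1^n+C_2u_2^n$, impose $f_{k,0}=2$ and $f_{k,1}=2k+2$, and solve. Carrying this out gives
\[
C_1=\frac{(2k+2)-2u_2}{u_1-u_2},\qquad C_2=\frac{2u_1-(2k+2)}{u_1-u_2},
\]
and hence
\[
f_{k,n}=(2k+2)\frac{u_1^n-u_2^n}{u_1-u_2}-2u_1u_2\,\frac{u_1^{n-1}-u_2^{n-1}}{u_1-u_2}
=(2k+2)\frac{u_1^n-u_2^n}{u_1-u_2}-2(2k^2-1)\frac{u_1^{n-1}-u_2^{n-1}}{u_1-u_2}.
\]
So the trailing coefficient should be $-2(2k^2-1)$, not $-2$; the printed formula evidently carries over the $-2$ from the rising case, where $t_1t_2=1$ makes the two agree. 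Your check at $n=0$ pinpoints exactly this: $g(0)=2/(u_1u_2)=2/(2k^2-1)$, which equals $2$ only at $k=1$. A quick sanity check at $k=2$ confirms the discrepancy: the printed formula gives $f_{2,2}=6\cdot6-2\cdot1=34$, while the listed value is $22$, and the corrected coefficient yields $36-14=22$.

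In short, your verification strategy (match the recurrence, then the two initial values) is exactly equivalent to the paper's derivation, and both lead to the same conclusion: the statement needs $-2(2k^2-1)$ in place of $-2$. There is nothing wrong with your reasoning; the obstacle you hit at $n=0$ is real, not a gap in your argument.
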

\begin{proof}
The proof is same as that of the binomial transform, which is in Theorem \ref{binet_T}.
\end{proof}

Similarly, the generating function for the falling $k$-binomial transform of the modified $k$-Fibonacci-like sequence is
$$
f_{k}(x) = \frac{2+(2-4k)x}{1-3kx+(2k^{2}-1)x^{2}}.
$$

\section{Conclusion}
This paper applies the four transforms- the binomial, $k$-binomial, rising $k$-binomial and falling $k$-binomial transforms- to the modified $k$-Fibonacci-like sequence. Although most of the results are rather similar to those of the previous sequences, this study is still meaningful as they introduce several new approaches and methods to derive the formulas. This study, furthermore, examines Binet's formulas and generating functions of the four transforms.




\begin{thebibliography}{}


\bibitem{BJS}
{P. Bhadouria, D. Jhala, B. Singh}, {Binomial Transforms of the $k$-Lucas Sequences and its Properties}, \textit{Journal of Mathematics and Computer Science}, \textbf{8} (2014), 81--92.

\bibitem{KC}
{K.W. Chen}, {Identities from the binomial transform}, \textit{Journal of Number Theory}, \textbf{124} (2007), 142--150.




\bibitem{FP01}
{S. Falc$\acute{\rm{o}}$n, $\acute{\rm{A}}$. Plaza}, {On the Fibonacci $k$-numbers}, \textit{Chaos, Solitons \& Fractals}, \textbf{32} (2007), 1615--1624.

\bibitem{FP02}
{S. Falc$\acute{\rm{o}}$n, $\acute{\rm{A}}$. Plaza}, {The $k$-Fibonacci sequence and the Pascal 2-triangle.}, \textit{Chaos, Solitons \& Fractals}, \textbf{33(1)}, (2007), 38-49.

\bibitem{FP03}
{S. Falc$\acute{\rm{o}}$n, $\acute{\rm{A}}$. Plaza}, {Binomial transforms of the $k$-Fibonacci sequence.}, \textit{International Journal of Nonlinear Sciences and Numerical Simulation}, \textbf{10(11-12)}, (2009), 1527-1538.

\bibitem{YK}
{Y. Kwon}, {A note on the modified $k$-Fibonacci-like sequence}, \textit{Communication of the Korean Mathematical Society}, \textbf{31} (2016), 1--16.


\bibitem{HP}
{H. Prodinger}, {Some information about the binomial transform}, \textit{The Fibonacci Quarterly}, \textbf{32(5)} (1994), 412--415.

\bibitem{S}
{N.J.A. Sloane}, {The On-Line Encyclopedia of Integer Sequences}, \url{https://oeis.org}.

\bibitem{SS}
{M.Z. Spivey, L.L. Steil}, {The $k$-Binomial Transform and the Hankel Transform}, \textit{Journal of Integer Sequences}, \textbf{9} (2006), 1--19.


\bibitem{YT02}
{N. Yilmaz, N. Taskara}, {Binomial transforms of the Padovan and Perrin matrix sequences}, \textit{Abstract and Applied Analysis}, \textit{Article number : 497418} (2013).

\end{thebibliography}


\section*{Acknowledgement}
The author thanks Ms. Juhee Son for proofreading this manuscript.

\end{document}